\date{May 27, 2007}
\newtheorem{theorem}{Theorem}[section]
\newtheorem{proposition}[theorem]{Proposition}
\newtheorem{lemma}[theorem]{Lemma}
\newtheorem{corollary}[theorem]{Corollary}
\theoremstyle{definition}
\theoremstyle{remark}
\newtheorem{remark}[theorem]{Remark}
\numberwithin{equation}{section}
\renewcommand{\epsilon}{\varepsilon}
\newcommand{\imunit}{{\rm i}}
\newcommand{\N}{\mathbb{N}}
\renewcommand{\phi}{\varphi}
\newcommand{\R}{\mathbb{R}}
\newcommand{\Sph}{\mathbb{S}}
\DeclareMathOperator{\curl}{curl}
\DeclareMathOperator{\im}{Im}
\DeclareMathOperator{\re}{Re}
\DeclareMathOperator{\supp}{supp}
\DeclareMathOperator{\sgn}{sgn}
\begin{document}

\title[Eigenvalue estimates]{Eigenvalue estimates for magnetic Schr\"odinger operators in domains}

\begin{abstract}
  Inequalities are derived for sums and quotients of eigenvalues of
  magnetic Schr\"o\-din\-ger operators with non-negative electric
  potentials in domains. The bounds reflect the correct order of
  growth in the semi-classical limit.
\end{abstract}

\author[R. L. Frank]{Rupert L. Frank}
\address{Rupert L. Frank, Department of Mathematics, Royal Institute of Technology, 100 44 Stockholm, Sweden}
\email{rupert@math.kth.se}

\author[A. Laptev]{Ari Laptev}
\address{Ari Laptev, Department of Mathematics, Imperial College London, London SW7 2AZ, UK 
$\&$ Department of Mathematics, Royal Institute of Technology, 100 44 Stockholm, Sweden}
\email{a.laptev@imperial.ac.uk $\&$ laptev@math.kth.se}

\author[S. Molchanov]{Stanislav Molchanov}
\address{Stanislav Molchanov, Department of Mathematics, University of North Carolina, Charlotte, NC 28223 - 0001, USA}
\email{smolchan@uncc.edu}

\maketitle


\section{Introduction and main results}

In this paper we derive inequalities for the eigenvalues of magnetic Schr\"o\-din\-ger operators in a domain. Such inequalities have received a lot of attention in the case where both the electric and the magnetic potential are absent. If $0<\lambda_1^0<\lambda_2^0\leq \ldots$ denote the eigenvalues of the Dirichlet Laplacian in a domain $\Omega\subset\R^d$ of finite measure, then the Berezin-Li-Yau inequality \cite{B,LY} states that
\begin{equation}\label{eq:bly}
\sum_j (\lambda-\lambda^0_j)_+ \leq \frac{2}{d+2} v_d |\Omega| \lambda^{1+d/2}
\end{equation}
for any $\lambda>0$. Here
\begin{equation}\label{eq:vd}
	v_d := |\Sph^{d-1}|/d = \pi^{d/2}/\Gamma(1+d/2)
\end{equation}
denotes the volume of the $d$-dimensional unit ball. Performing a Legendre transform in \eqref{eq:bly} one obtains (see \cite{LW})
\begin{equation}\label{eq:ly}
\sum_{j=1}^k \lambda^0_j \geq \frac{4\pi^2 d}{d+2} v_d^{-2/d} |\Omega|^{-2/d} k^{1+2/d}.
\end{equation}
The important point of the estimates \eqref{eq:bly} and \eqref{eq:ly} is that they are sharp in the semi-classical limit. Indeed, according to Weyl's theorem, see, e.g., \cite{LL, RS},
\begin{equation}\label{eq:weyl}
	\lambda_k^0 \sim 4\pi^2 v_d^{-2/d} |\Omega|^{-2/d} k^{2/d},
	\qquad k\to\infty,
\end{equation}
which implies sharpness of \eqref{eq:bly} as $\lambda\to\infty$ and of \eqref{eq:ly} as $k\to\infty$.

Given the \emph{lower} bounds \eqref{eq:bly} and \eqref{eq:ly} on eigenvalue sums it is natural to ask for \emph{upper} bounds which have the correct asymptotic behavior. In \cite{L} it is shown that
\begin{equation}\label{eq:laptev0}
	\sum_j (\lambda-\lambda_j^0)_+ 
	\geq \frac{2}{(d+2)(2\pi)^d} v_d \|\omega\|_\infty^{-2} (\lambda-\lambda_1^0)_+^{1+d/2}
\end{equation}
for all $\lambda>0$ where $\omega\geq 0$ denotes the normalized
eigenfunction corresponding to the eigenvalue $\lambda_1^0$. Later it
was observed in \cite{H} that by an isoperimetric inequality of Chiti
\cite{Ch} $\|\omega\|_\infty$ can be bounded from above in terms of
the eigenvalue $\lambda_1^0$. This yields the estimate
\begin{equation}\label{eq:main0}
  \sum_j (\lambda-\lambda_j^0)_+ 
  \geq \frac2{d+2} H_d^{-1} (\lambda_1^0)^{-d/2} (\lambda -
  \lambda_1^0)_+^{1+d/2}
\end{equation}
with a non-sharp but explicit constant
\begin{equation}\label{eq:hd}
	H_d := \frac{2d}{j_{(d-2)/2}^2 J_{d/2}^2(j_{(d-2)/2})}.
\end{equation}
Here $J_\nu$ denotes the Bessel function of order $\nu$ and $j_\nu$ its first positive zero. Passing to the Legendre transform one finds
\begin{equation}\label{eq:main0legendre}
  \sum_{j=1}^k (\lambda_j^0-\lambda_1^0) 
  \leq \frac d{d+2} H_d^{2/d} \lambda_1^0 k^{1+2/d}.
\end{equation}
Both inequalities \eqref{eq:main0} and \eqref{eq:main0legendre}
reflect the correct order of growth with respect to $k$ as given by
\eqref{eq:weyl}. However, note that $\lambda_1^0$ in these
inequalities replaces $|\Omega|^{-2/d}$ in \eqref{eq:bly} and
\eqref{eq:ly}. This can be used to derive upper bounds on the
eigenvalue ratios $\lambda_k/\lambda_1$ from \eqref{eq:main0} and
\eqref{eq:main0legendre}. The sharp constant in the case $k=2$ was
found in \cite{AB}.

The purpose of the present paper is to show that the inequality \eqref{eq:main0} and its corollaries extends, with the same constant, to the case of a non-trivial magnetic field and a non-negative electric potential. More precisely, let $\Omega\subset\R^d$, $d\geq 2$, be a domain of finite measure, let $V\in L_{1,loc}(\Omega)$ be non-negative and let $A\in L_{2,loc}(\Omega,\R^d)$ be arbitrary. We define the Schr\"odinger operator
\begin{equation*}
	H := (D-A)^2 + V \qquad\text{in } L_2(\Omega)
\end{equation*}
by closing its quadratic form on $C_0^\infty(\Omega)$. We assume that this operator has compact resolvent and denote its eigenvalues by $0< \lambda_1 \leq \lambda_2 \leq \ldots$, where each eigenvalue is repeated according to multiplicities.

Our main result is

\begin{theorem}\label{main}
  For any $\lambda\geq 0$ one has
  \begin{equation}\label{eq:main}
    \sum_j (\lambda-\lambda_j)_+ 
    \geq \frac2{d+2} H_d^{-1} \lambda_1^{-d/2}(\lambda-\lambda_1)_+^{1+d/2}
  \end{equation}
  with the constant $H_d$ from \eqref{eq:hd}. Moreover, for any
  $k\in\N$ one has
  \begin{equation}\label{eq:mainlegendre}
    \sum_{j=1}^k (\lambda_j-\lambda_1) 
    \leq \frac d{d+2} H_d^{2/d} \lambda_1 k^{1+2/d}.
  \end{equation}
\end{theorem}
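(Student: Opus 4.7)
The plan is to prove \eqref{eq:main} first and then deduce \eqref{eq:mainlegendre} by Legendre duality, exactly as \eqref{eq:main0legendre} is obtained from \eqref{eq:main0}. Writing $\omega_1$ for the normalized ground state ($H\omega_1=\lambda_1\omega_1$, $\|\omega_1\|_2=1$), I would invoke the variational identity
\begin{equation*}
	\sum_j(\lambda-\lambda_j)_+ \;=\; \sup_{0\leq\gamma\leq I}\tr\bigl((\lambda-H)\gamma\bigr)
\end{equation*}
and construct a trial density matrix $\gamma$ from the magnetic coherent states $\phi_p(x) := e^{\imunit\,p\cdot x}\omega_1(x)$, $p\in\R^d$. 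This adapts the non-magnetic construction of \cite{L} in a gauge-covariant way.

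The key computation uses $(D-A)(e^{\imunit p\cdot x}\omega_1) = e^{\imunit p\cdot x}\bigl((D-A)\omega_1 + p\,\omega_1\bigr)$ together with $H\omega_1=\lambda_1\omega_1$ to give
\begin{equation*}
	\langle\phi_p,H\phi_p\rangle \;=\; \lambda_1 + |p|^2 + 2\,p\cdot\vec\pi,
	\qquad \vec\pi := \langle\omega_1,(D-A)\omega_1\rangle \in \R^d
\end{equation*}
(the vector $\vec\pi$ is real since $D-A$ is symmetric). Choosing $E := \{p : \langle\phi_p,H\phi_p\rangle\leq\lambda\}$, a Euclidean ball of radius $\sqrt{(\lambda-\lambda_1)+|\vec\pi|^2}$ centered at $-\vec\pi$, I would set
\begin{equation*}
	\gamma \;:=\; \|\omega_1\|_\infty^{-2}\!\int_E |\phi_p\rangle\langle\phi_p|\,\frac{dp}{(2\pi)^d} \;=\; \|\omega_1\|_\infty^{-2}\,M_{\omega_1}\,\F^{-1}\chi_E\F\,M_{\overline{\omega_1}},
\end{equation*}
and the operator bound $\gamma\leq I$ then follows because both the Fourier multiplier $\chi_E$ and multiplication by $|\omega_1|/\|\omega_1\|_\infty$ are contractions. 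Evaluating the trace produces the magnetic analogue of \eqref{eq:laptev0},
\begin{equation*}
	\sum_j(\lambda-\lambda_j)_+ \;\geq\; \frac{2\,v_d}{(d+2)(2\pi)^d}\,\|\omega_1\|_\infty^{-2}\,\bigl((\lambda-\lambda_1) + |\vec\pi|^2\bigr)_+^{1+d/2},
\end{equation*}
and simply dropping $|\vec\pi|^2$ gives an estimate depending only on $(\lambda-\lambda_1)_+^{1+d/2}$.

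To convert this into \eqref{eq:main}, I would control $\|\omega_1\|_\infty$ by the ground-state eigenvalue alone. The diamagnetic inequality and the hypothesis $V\geq 0$ show that $u:=|\omega_1|$ lies in $H_0^1(\Omega)$, is $L^2$-normalized, and satisfies $-\Delta u\leq\lambda_1 u$ weakly (Kato's inequality). Chiti's rearrangement comparison \cite{Ch}, as used in \cite{H}, then yields the pointwise estimate $u^*(0)\leq v(0)$, with $v$ the normalized Dirichlet ground state on the ball whose first eigenvalue equals $\lambda_1$. A direct Bessel-function calculation gives $v(0)^2 = v_d H_d (2\pi)^{-d}\lambda_1^{d/2}$ with $H_d$ as in \eqref{eq:hd}, so
\begin{equation*}
	\|\omega_1\|_\infty^2 \;\leq\; \frac{v_d H_d}{(2\pi)^d}\,\lambda_1^{d/2},
\end{equation*}
and insertion into the coherent-state bound proves \eqref{eq:main}.

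Finally, \eqref{eq:mainlegendre} follows from \eqref{eq:main} through the well-known Legendre identity $\sup_\lambda\bigl(k\lambda-\sum_j(\lambda-\lambda_j)_+\bigr) = \sum_{j=1}^k\lambda_j$ combined with the explicit Legendre transform of the right-hand side of \eqref{eq:main}. The main obstacle is the gauge-covariant construction of $\gamma$ — verifying $\gamma\leq I$ without ever needing a gauge in which $\omega_1$ is real — and invoking Chiti's comparison for $|\omega_1|$ rather than a genuine Laplacian eigenfunction; both difficulties are resolved through the diamagnetic/Kato inequalities.
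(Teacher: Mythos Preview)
Your proposal is correct and follows essentially the same route as the paper: a coherent-state lower bound on $\sum_j(\lambda-\lambda_j)_+$ in terms of $\|\omega_1\|_\infty$ (the paper's Proposition~2.1, argued there via Plancherel and Jensen rather than an explicit density matrix, but the two presentations are equivalent), then Chiti's rearrangement bound extended to the magnetic case through Kato's inequality (Proposition~2.2), and finally the Legendre transform. The one noteworthy difference is your treatment of the cross term $2p\cdot\vec\pi$: the paper actually proves $\vec\pi=0$ by a short variational contradiction (its Lemma~3.1), whereas you carry $|\vec\pi|^2$ through and simply discard it at the end --- your shortcut suffices for \eqref{eq:main}, but the paper's vanishing-current lemma is a slightly sharper statement in its own right.
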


From these estimates on sums of eigenvalues one easily obtains
estimates on the ratio $\lambda_{k+1}/\lambda_1$.

\begin{corollary}\label{maincor}
	For any $k\in\N$ one has
	\begin{equation}\label{eq:mainsingle}
		\lambda_{k+1} \leq \left(1+ \left(1+\frac d2\right)^{2/d} H_d^{2/d} k^{2/d} \right) \lambda_1,
	\end{equation}
	\begin{equation}\label{eq:mainyang}
		\lambda_{k+1} \leq \left(1+\frac4d\right)\left(1+ \frac d{d+2} H_d^{2/d} k^{2/d} \right) \lambda_1.
	\end{equation}
\end{corollary}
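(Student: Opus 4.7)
Both estimates will follow from Theorem \ref{main} together with elementary manipulations.

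For \eqref{eq:mainsingle}, the plan is to substitute $\lambda = \lambda_{k+1}$ into \eqref{eq:main}. The key trivial observation is that the sum $\sum_j(\lambda_{k+1}-\lambda_j)_+$ has at most $k$ nonzero summands (namely those with $\lambda_j < \lambda_{k+1}$), each bounded above by $\lambda_{k+1}-\lambda_1$, whence
\[
k(\lambda_{k+1}-\lambda_1) \;\geq\; \frac{2}{d+2}\,H_d^{-1}\,\lambda_1^{-d/2}\,(\lambda_{k+1}-\lambda_1)^{1+d/2}.
\]
If $\lambda_{k+1}=\lambda_1$ then \eqref{eq:mainsingle} is trivial; otherwise divide by $\lambda_{k+1}-\lambda_1$ and rearrange to obtain $\lambda_{k+1}-\lambda_1 \leq (1+d/2)^{2/d} H_d^{2/d} \lambda_1 k^{2/d}$, which is exactly \eqref{eq:mainsingle}.

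For \eqref{eq:mainyang}, the plan is to combine \eqref{eq:mainlegendre} with the Yang-type universal bound
\[
\lambda_{k+1} \;\leq\; \left(1+\frac{4}{d}\right)\frac{1}{k}\sum_{j=1}^{k}\lambda_j,
\]
valid for our operator $H = (D-A)^2 + V$ with $V\geq 0$. This bound is proved by the classical Harrell--Stubbe commutator argument: the double commutators $[[H,x_i],x_i]$ reduce to the scalar $-2$ (summing to $-2d$), unaffected by $A$ or by the multiplication operator $V$, and the non-negativity of $V$ yields $\|(D-A)\phi_j\|^2 \leq \lambda_j$ on each eigenfunction $\phi_j$; hence the Harrell--Stubbe quadratic trace inequality $\sum_{j=1}^{k}(\lambda_{k+1}-\lambda_j)^2 \leq (4/d)\sum_{j=1}^{k}(\lambda_{k+1}-\lambda_j)\lambda_j$ and the resulting Yang bound carry over verbatim from the Laplacian case. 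Dividing \eqref{eq:mainlegendre} by $k$ gives $k^{-1}\sum_{j=1}^{k}\lambda_j \leq \lambda_1\bigl(1+\frac{d}{d+2}H_d^{2/d}k^{2/d}\bigr)$, and substitution into the Yang bound produces \eqref{eq:mainyang}.

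The main point needing care is the justification of the Yang-type inequality in the magnetic setting under the weak regularity hypotheses $A \in L_{2,\loc}$, $V \in L_{1,\loc}$: the formal commutator computation and the test-function construction must be made rigorous on the natural form domain. Modulo this standard technical issue, both bounds are immediate consequences of Theorem \ref{main}.
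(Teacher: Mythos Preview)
Your proof is correct and follows essentially the same approach as the paper: for \eqref{eq:mainsingle} the paper also sets $\lambda=\lambda_{k+1}$ in \eqref{eq:main} and uses the trivial bound $\sum_{j=1}^k(\lambda_{k+1}-\lambda_j)\leq k(\lambda_{k+1}-\lambda_1)$; for \eqref{eq:mainyang} the paper likewise combines \eqref{eq:mainlegendre} with the second Yang inequality (Y), which it has established separately (Proposition~\ref{yang} and Corollary~\ref{yangcor}) via exactly the commutator argument you sketch. Your remark about the regularity issue for the Yang inequality in the magnetic setting is fair, but the paper treats this as part of the proof of Proposition~\ref{yang} rather than of the corollary itself.
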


We emphasize that both estimates in Corollary \ref{maincor} have the
correct order of growth as $k\to\infty$. Indeed, the asymptotics
\eqref{eq:weyl} are also valid for the eigenvalues $\lambda_k$
provided $V$, $A$ and $\Omega$ are sufficiently regular.

For large values of $k$ the bound \eqref{eq:mainyang} is better than
\eqref{eq:mainsingle}. For small values of $k$ \eqref{eq:mainsingle}
is better than \eqref{eq:mainyang}, but an even better bound is given
by
\begin{equation}\label{eq:ppw}
  \lambda_{k+1} \leq \left(1+\frac4d\right)^k \lambda_1,
  \qquad k\in\N.
\end{equation}
The latter is a Payne-P\'olya-Weinberger-type bound. The different
constants in the bounds \eqref{eq:mainsingle}, \eqref{eq:mainyang} and
\eqref{eq:ppw} are further discussed in \cite{H}.

We emphasize that all our bounds are known in the case $V\equiv A\equiv 0$, see \cite{H}. Our point is that they extend, with the same constants, to the case of a non-trivial magnetic field and a non-negative electric potential. This is non-trivial since the diamagnetic inequality apparently gives only information about the lowest eigenvalue. In particular, it seems not clear to us whether the PPW-conjecture (see \cite{AB}), namely that the quotient $\lambda_2/\lambda_1$ is maximized for a circle and $V\equiv 0$, extends to the magnetic case.

We remark that our inequalities are also true if $d=1$, but in this case much stronger results are known. (Recall that any magnetic field in one dimension can be gauged away.) In particular, if $V\geq0$, then one has the sharp inequality $\lambda_k/\lambda_1\leq k^2$, see \cite{AB2}. A beautiful proof of this inequality for $k=2$ is given in \cite{AB1}.

\subsection*{Acknowledgements}
The authors would like to thank Mark Ashbaugh for several helpful
remarks. Partial financial support through the SPECT ESF programme
(R.F., A.L.) and the Gustafsson's foundation (S.M.) is gratefully
acknowledged. The first two authors appreciate the hospitality of the
Isaac Newton Institute Cambridge where this work was completed.


\section{Proof of Theorem \ref{main}}

\subsection{Main ingredients}

Our proof of Theorem \ref{main} is based on three propositions which
might be of independent interest. We defer their proofs to the
following sections. First, we derive a lower bound on the eigenvalue
sum $\sum_j (\lambda-\lambda_j)_+$ in terms of $\lambda$ and the
maximum of the lowest eigenfunction.

\begin{proposition}\label{laptev}
	Let $\omega$ be an  eigenfunction corresponding to the lowest eigenvalue $\lambda_1$ of $H$ with $\|\omega\|=1$. Then for any $\lambda\geq 0$ one has
	\begin{equation}\label{eq:laptev}
		\sum_j (\lambda-\lambda_j)_+ 
		\geq \frac{2 v_d}{(d+2)(2\pi)^d} \|\omega\|_\infty^{-2} (\lambda-\lambda_1)_+^{1+d/2}
	\end{equation}
	with $v_d$ given in \eqref{eq:vd}.
\end{proposition}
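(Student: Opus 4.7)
The plan is to mimic the Berezin--Li--Yau coherent-state argument, but with trial functions modulated by the ground state, $f_p(x) := \omega(x) e^{\imunit p \cdot x}$ for $p \in \R^d$, so that $\|\omega\|_\infty$ enters naturally through Plancherel's theorem on the spectral side.

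First I would compute the quadratic form of $H$ on $f_p$. A direct calculation gives $(D-A) f_p = e^{\imunit p \cdot x}((D-A)\omega + p\omega)$, and upon expanding the square and integrating, the cross term produces the total magnetic current $J := \int_\Omega \bigl(\im(\bar\omega \nabla\omega) - A|\omega|^2\bigr)\,dx$. Using $\|\omega\|=1$ and $H\omega = \lambda_1 \omega$ one arrives at
\[
    \langle f_p, H f_p \rangle = \lambda_1 + 2 p \cdot J + |p|^2, \qquad \|f_p\|=1.
\]
The spectral theorem, via $(\lambda - H)_+ \geq \lambda - H$ combined with $(\lambda - H)_+ \geq 0$, then yields
\[
    \sum_j (\lambda - \lambda_j)_+ |\langle f_p, \phi_j \rangle|^2
    = \langle f_p, (\lambda - H)_+ f_p \rangle
    \geq \bigl(\lambda - \lambda_1 - 2 p \cdot J - |p|^2\bigr)_+,
\]
where $\{\phi_j\}$ is an orthonormal basis of eigenfunctions of $H$.

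Next I would integrate both sides against $(2\pi)^{-d}\,dp$ over $p \in \R^d$. On the left, $\langle f_p, \phi_j \rangle$ is the Fourier transform of $\bar\omega \phi_j$ (extended by zero outside $\Omega$), so Plancherel gives
\[
    \int_{\R^d} |\langle f_p, \phi_j \rangle|^2 \frac{dp}{(2\pi)^d} = \int_\Omega |\omega|^2 |\phi_j|^2 \,dx \leq \|\omega\|_\infty^2.
\]
On the right, the substitution $q = p + J$ converts the integrand into $\bigl(R^2 - |q|^2\bigr)_+$ with $R^2 = \lambda - \lambda_1 + |J|^2$, and the standard ball integral evaluates to $\frac{2 v_d}{(d+2)(2\pi)^d}(\lambda - \lambda_1 + |J|^2)_+^{1+d/2}$. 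Dropping the nonnegative $|J|^2$ (permissible since $t \mapsto t_+^{1+d/2}$ is monotone) and dividing through by $\|\omega\|_\infty^2$ delivers exactly \eqref{eq:laptev}.

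The only genuinely new feature compared with the non-magnetic case is the current cross term $2 p \cdot J$ arising from the magnetic gradient, and I expect this to be the one place requiring care. In the end it is harmless: completing the square in $p$ shows that $J$ merely shifts the ball of integration in momentum space, and the resulting term $|J|^2 \geq 0$ only improves the lower bound. The positivity of $V$ plays no further role beyond securing a well-defined eigenvalue problem with $\lambda_1 > 0$.
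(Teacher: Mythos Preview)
Your argument is correct and follows the same coherent-state strategy as the paper: the trial functions $f_p(x)=e^{\imunit p\cdot x}\omega(x)$, Plancherel on the spectral side, and the convexity inequality $\langle f_p,(\lambda-H)_+ f_p\rangle \geq (\lambda-\langle f_p,Hf_p\rangle)_+$ (which the paper phrases as Jensen's inequality for the spectral measure) are all identical.

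The one substantive difference lies in the treatment of the magnetic current $J$. The paper isolates this in a separate lemma and shows that in fact $J=0$: if $J\neq 0$, then $h[\theta_{\epsilon\xi_0}]=\lambda_1+2\epsilon\,\xi_0\cdot J+\epsilon^2|\xi_0|^2<\lambda_1$ for a suitable direction $\xi_0$ and small $\epsilon>0$, contradicting the variational characterization of $\lambda_1$. Your route---completing the square and using $(\lambda-\lambda_1+|J|^2)_+^{1+d/2}\geq(\lambda-\lambda_1)_+^{1+d/2}$---is shorter and avoids the auxiliary lemma, but the paper's version yields the sharper structural fact $J=0$, which is of independent interest (the ground state carries no net current). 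Either way the proposition follows; your observation that $V\geq 0$ plays no role in this particular step is also consistent with the paper's proof.
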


In the case $V\equiv A\equiv 0$ this is Theorem 4.1 in \cite{L}. It
turnd out that almost the same proof holds even for arbitrary $A$ and
$V\geq0$. We emphasize that in the magnetic case the lowest eigenvalue
$\lambda_1$ may be degenerate and $\|\omega\|_\infty$ may depend on
the choice of the eigenfunction. Some inequalities of the same spirit
have been obtained in \cite{S} for elliptic differential operators
with  variable coefficients.

In order to use Proposition \ref{laptev} for the proof of Theorem \ref{main} we need to estimate the maximum of the lowest eigenfunction in terms of the corresponding eigenvalue.

\begin{proposition}\label{chiti}
  Let $\lambda$ be an eigenvalue of $H$ and $\omega$ a corresponding
  eigenfunction. Then for any $p>0$
  \begin{equation}\label{eq:chiti}
    \|\omega\|_\infty \leq C_d(p) \lambda^{d/2p}\|\omega\|_p,
  \end{equation}
  with the universal constant 
  \begin{equation*}
    C_d(p) = \left( 2^{p(d-2)/2}\Gamma(d/2)^p |\Sph^{d-1}|
    \int_0^{j_{(d-2)/2}}\!\!\! J_{(d-2)/2}^p(r) r^{-p(d-2)/2+d-1}\,dr
    \right)^{-1/p}
    \!\!\!\!.
  \end{equation*}
  In particular,
  \begin{equation}\label{eq:cd2}
    C_d(2) = 
    \left( \pi^{d/2} 2^{d-2} \Gamma(d/2) j_{(d-2)/2}^2 J_{d/2}^2(j_{(d-2)/2}) \right)^{-1/2}.
  \end{equation}
\end{proposition}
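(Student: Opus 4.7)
\emph{Plan.} The strategy is to use Kato's inequality to strip off the magnetic field and the non-negative potential, reducing to a purely Laplacian subsolution problem, and then to invoke Chiti's classical isoperimetric comparison theorem. Since $H\omega=\lambda\omega$ rewrites as $(D-A)^2\omega=(\lambda-V)\omega$, the Kato-type distributional inequality for magnetic Schr\"odinger operators gives
\begin{equation*}
	-\Delta|\omega|\leq\re\bigl(\overline{\sgn\omega}\,(D-A)^2\omega\bigr)=(\lambda-V)|\omega|\leq\lambda|\omega|
\end{equation*}
in $\D'(\Omega)$, the last step using $V\geq 0$. The standard diamagnetic inequality, together with $\omega\in\dom((D-A)^2)$, ensures that $u:=|\omega|$ lies in $H^1_0(\Omega)$ after extension by zero. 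Thus $u$ is a non-negative $H^1_0$-subsolution of $-\Delta u=\lambda u$ on $\Omega$.

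Next, I apply Chiti's comparison theorem \cite{Ch}. Let $B_\lambda\subset\R^d$ denote the centered ball whose first Dirichlet eigenvalue of $-\Delta$ equals $\lambda$; explicitly, $B_\lambda$ has radius $j_{(d-2)/2}/\sqrt\lambda$. Let $v$ be the positive first Dirichlet eigenfunction on $B_\lambda$, normalized so that $\|v\|_{L^p(B_\lambda)}=\|u\|_{L^p(\Omega)}$. Chiti's theorem, which is obtained by combining the coarea formula, the Euclidean isoperimetric inequality, and a careful analysis of the level-set distribution functions of $u$ and $v$, states that $\|u\|_q\leq\|v\|_q$ for every $q\in[p,\infty]$. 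Letting $q\to\infty$ yields $\|\omega\|_\infty=\|u\|_\infty\leq v(0)$.

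It remains to compute $v(0)$ explicitly in terms of $\|\omega\|_p$. Up to the normalization constant, $v(x)=c\,|x|^{-(d-2)/2}J_{(d-2)/2}(\sqrt\lambda\,|x|)$, as a direct computation using Bessel's equation confirms. The small-argument asymptotics $J_\nu(r)\sim(r/2)^\nu/\Gamma(\nu+1)$ with $\nu=(d-2)/2$ give $v(0)=c\,\lambda^{(d-2)/4}\bigl(2^{(d-2)/2}\Gamma(d/2)\bigr)^{-1}$. Passing to polar coordinates in $\|v\|_p^p$ and substituting $r\mapsto r/\sqrt\lambda$ produces the Bessel integral
\begin{equation*}
	\|v\|_p^p = c^p\,|\Sph^{d-1}|\,\lambda^{p(d-2)/4-d/2}\int_0^{j_{(d-2)/2}}J_{(d-2)/2}^p(r)\,r^{-p(d-2)/2+d-1}\,dr.
\end{equation*}
Solving $\|v\|_p=\|\omega\|_p$ for $c$ and substituting into the bound $\|\omega\|_\infty\leq v(0)$ assembles the powers of $\lambda$ into $\lambda^{d/(2p)}$ and the remaining factors into precisely the stated $C_d(p)$. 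The special case $p=2$ simplifies further using the classical Bessel identity $\int_0^{j_\nu}J_\nu^2(r)\,r\,dr=\tfrac12\,j_\nu^2\,J_{\nu+1}^2(j_\nu)$, which after inserting $|\Sph^{d-1}|=2\pi^{d/2}/\Gamma(d/2)$ yields \eqref{eq:cd2}.

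\emph{Main obstacle.} The nontrivial analytic input is Chiti's theorem, which I would invoke as a black box; reproving it here would be a substantial digression. The conceptually essential step is Kato's inequality, for it is precisely what allows both the magnetic vector potential $A$ and the non-negative electric potential $V$ to disappear upon passing to $|\omega|$, so that the symmetrization is driven purely by $-\Delta$ and the same universal constant $C_d(p)$ emerges as in the non-magnetic situation.
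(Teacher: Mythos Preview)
Your proposal is correct and follows essentially the same route as the paper: the key step in both is Kato's inequality $-\Delta|\omega|\leq\lambda|\omega|$ to eliminate $A$ and $V\geq 0$, after which the Chiti comparison with the radial Bessel eigenfunction on $B_\lambda$ and the explicit Bessel computation give the constant $C_d(p)$. The only presentational difference is that the paper re-derives Chiti's comparison in full (via the Talenti-type integral inequality for the rearrangement, using coarea and the isoperimetric inequality), whereas you invoke it as a black box; since Chiti's statement in \cite{Ch} is for eigenfunctions rather than subsolutions, strictly speaking you are citing a mild extension, but the proof goes through verbatim with the inequality sign, exactly as the paper shows.
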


In the case $V\equiv A\equiv0$ this is a result of Chiti
\cite{Ch}. Non-negative electric potentials can be included without
major changes, as was done in \cite{AB}. Using Kato's inequalitiy we
shall show that the same proof even works for a non-trivial magnetic
field.

We emphasize that \eqref{eq:chiti} is an isoperimetric result: If
$\Omega$ is a ball, $V\equiv\curl A\equiv 0$ and $\lambda$ is the
lowest eigenvalue of $H$, then one has equality in
\eqref{eq:chiti}. This explains the value of the constant $C_d(p)$. To
discuss the cases of equality would be beyond the scope of this
paper.

The third ingredient employed in the proof of our main results is the following Yang-type inequality.

\begin{proposition}\label{yang}
	For any $k\in\N$ one has
	\begin{equation}\label{eq:yang}
		\sum_{j=1}^k (\lambda_{k+1}-\lambda_j)\left(\lambda_{k+1}-\left(1+\frac4d\right)\lambda_j\right) \leq 0.
	\end{equation}
\end{proposition}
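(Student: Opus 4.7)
The plan is to adapt the classical derivation of Yang's inequality for the Dirichlet Laplacian to the present magnetic and potential setting, using the coordinate functions $x_\ell$ as commutator multipliers. The key observation that makes the constant $4/d$ survive intact is that the scalar double commutator $[[H,x_\ell],x_\ell]=-2$ is unaffected by $A$ or $\pot$: indeed $[T_\ell,x_\ell]=-\imunit$ irrespective of $A$ (where $T_\ell := D_\ell-A_\ell$), and $\pot$ commutes with $x_\ell$.

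Let $\{u_j\}$ be an orthonormal basis of eigenfunctions of $H$ and set $a_{ij}^\ell := \langle u_i,x_\ell u_j\rangle$, $A_{ij}:=\sum_\ell |a_{ij}^\ell|^2$. For each $j\leq k$ and $\ell$, the trial function $\phi_{j,\ell} := x_\ell u_j - \sum_{i\leq k} a_{ij}^\ell u_i$ is orthogonal to $u_1,\dots,u_k$, so the variational principle gives $(\lambda_{k+1}-\lambda_j)\|\phi_{j,\ell}\|^2 \leq \langle\phi_{j,\ell},[H,x_\ell]u_j\rangle$. The strategy is to derive from this the two sum rules
\[
\sum_i(\lambda_i-\lambda_j)A_{ij}=d, \qquad \sum_i(\lambda_i-\lambda_j)^2 A_{ij}\leq 4\lambda_j.
\]
The first follows from $\sum_\ell \langle x_\ell u_j,[H,x_\ell]u_j\rangle = -\tfrac12\langle u_j,\sum_\ell[[H,x_\ell],x_\ell]u_j\rangle = d$. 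The second uses $[H,x_\ell]u_j = -2\imunit\, T_\ell u_j$ together with $\langle u_i,[H,x_\ell]u_j\rangle = (\lambda_i-\lambda_j)a_{ij}^\ell$ to get $\langle u_i,T_\ell u_j\rangle = \tfrac{\imunit}{2}(\lambda_i-\lambda_j)a_{ij}^\ell$; Parseval then yields $\sum_i(\lambda_i-\lambda_j)^2 A_{ij} = 4\sum_\ell \|T_\ell u_j\|^2 = 4\bigl(\lambda_j - \int_\Omega \pot|u_j|^2\bigr) \leq 4\lambda_j$, where non-negativity of $\pot$ enters.

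Now split each $\sum_i$ at $i=k$ and bound $(\lambda_i-\lambda_j)^2 \geq (\lambda_{k+1}-\lambda_j)(\lambda_i-\lambda_j)$ on the range $i>k$. Multiplying the resulting single-$j$ estimate by $(\lambda_{k+1}-\lambda_j)$ and summing over $j\leq k$, the cross terms collect into
\[
\sum_{i,j=1}^k (\lambda_{k+1}-\lambda_j)(\lambda_i-\lambda_j)(\lambda_{k+1}-\lambda_i)\,A_{ij},
\]
which vanishes because the coefficient is antisymmetric in $(i,j)$ while $A_{ij}=A_{ji}$. What remains is $d\sum_j(\lambda_{k+1}-\lambda_j)^2 \leq 4\sum_j(\lambda_{k+1}-\lambda_j)\lambda_j$, equivalent to \eqref{eq:yang} after rearrangement.

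The main obstacle is really just the magnetic bookkeeping: one must verify that the two structural facts used in the classical Yang proof—the symmetry $A_{ij}=A_{ji}$ (automatic from $|\langle u_i,x_\ell u_j\rangle| = |\langle u_j,x_\ell u_i\rangle|$) and the scalar commutator identity $[[H,x_\ell],x_\ell]=-2$—carry over unchanged when $A\not\equiv 0$ and $\pot\not\equiv 0$. Both checks are routine, and the subsequent algebraic manipulations (in particular the antisymmetry cancellation) mimic the non-magnetic argument verbatim, with $\pot\geq 0$ entering only in the harmless direction.
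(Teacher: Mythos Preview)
Your proof is correct and follows essentially the same route as the paper: coordinate multipliers $x_\ell$, the commutator identity $[H,x_\ell]=-2\imunit(D_\ell-A_\ell)$, the bound $\sum_\ell\|(D_\ell-A_\ell)u_j\|^2\leq\lambda_j$ from $V\geq 0$, and the antisymmetry cancellation after multiplying by $(\lambda_{k+1}-\lambda_j)$ and summing. The only organizational difference is that you reach the key single-$j$ inequality via full-basis Parseval sum rules and the elementary estimate $(\lambda_i-\lambda_j)^2\geq(\lambda_{k+1}-\lambda_j)(\lambda_i-\lambda_j)$ for $i>k$, whereas the paper works with the truncated trial function $\phi_i$ and a Cauchy--Schwarz step; these are well-known equivalent presentations of Yang's argument and lead to the identical intermediate inequality before the antisymmetrization.
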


This result in the case $V\equiv A\equiv0$ is due to Yang \cite{Y}. A
slightly stronger result in the case of arbitrary $V$ and $A$ is
contained in Theorem 5 in \cite{HS}; for results in the same spirit
see \cite{LP}. A different proof (for arbitrary $V\geq 0$, but
$A\equiv 0$) was given in \cite{A}. In Section \ref{sec:yang} we shall
show that this proof extends to the case of a non-trivial magnetic
field.

One can deduce from Theorem \ref{yang} several weaker but more explicit eigenvalue estimates.

\begin{corollary}\label{yangcor}
	For any $k\in\N$ one has
	\begin{itemize}
		\item[(Y)]
		$\lambda_{k+1} \leq \left(1+\frac4d\right) \frac1k \sum_{j=1}^k\lambda_j,$
		\item[(HP)]
		$\frac 1k \sum_{j=1}^k \frac{\lambda_j}{\lambda_{k+1}-\lambda_j} \geq \frac d4,$
		\item[(PPW)]
		$\lambda_{k+1}-\lambda_k \leq \frac4d \frac1k \sum_{j=1}^k \lambda_j.$
	\end{itemize}
\end{corollary}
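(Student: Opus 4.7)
The plan is to derive (Y) directly from Proposition \ref{yang} by a Cauchy--Schwarz argument, and then to extract (PPW) and (HP) as short consequences of (Y). Throughout I write $\alpha := 1+4/d$ and $\bar\lambda := k^{-1}\sum_{j=1}^k \lambda_j$.

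\emph{For (Y)}: I would expand the product in \eqref{eq:yang} into a combination of $k\lambda_{k+1}^2$, $\lambda_{k+1}\sum \lambda_j$ and $\sum \lambda_j^2$. The crucial move is to replace $\sum \lambda_j^2$ by the smaller quantity $k\bar\lambda^2$ (valid by Cauchy--Schwarz, and yielding a weaker but still-true inequality because this term has positive coefficient $\alpha$). After dividing by $k$ the result collapses to $(\lambda_{k+1}-\bar\lambda)(\lambda_{k+1}-\alpha\bar\lambda)\leq 0$. Because $\lambda_{k+1}\geq \lambda_k \geq \bar\lambda$ the first factor is non-negative, forcing the second to be non-positive, which is exactly (Y).

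\emph{For (PPW)}: The bound $\lambda_k \geq \bar\lambda$ combined with (Y) immediately gives
\begin{equation*}
\lambda_{k+1} - \lambda_k \leq \lambda_{k+1} - \bar\lambda \leq (\alpha-1)\bar\lambda = \tfrac{4}{d}\bar\lambda.
\end{equation*}

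\emph{For (HP)}: I would note that the function $f(t) = t/(\lambda_{k+1}-t)$ is convex on $[0,\lambda_{k+1})$ (its second derivative is $2\lambda_{k+1}/(\lambda_{k+1}-t)^3 > 0$) and apply Jensen's inequality to obtain $\frac{1}{k}\sum_j \frac{\lambda_j}{\lambda_{k+1}-\lambda_j} \geq \frac{\bar\lambda}{\lambda_{k+1}-\bar\lambda}$. By (Y) the right-hand side is at least $\bar\lambda/((4/d)\bar\lambda) = d/4$. The degenerate case $\lambda_j=\lambda_{k+1}$ is handled by the natural convention that the corresponding summand is $+\infty$.

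I do not expect any serious obstacle; the derivation is purely algebraic. The two non-routine moves are recognizing that the Cauchy--Schwarz substitution $\sum\lambda_j^2 \geq k\bar\lambda^2$ in the expansion of \eqref{eq:yang} produces a quadratic in $\lambda_{k+1}$ that factors cleanly over the reals, and that (HP) is a one-line consequence of (Y) via the convexity of $t\mapsto t/(c-t)$.
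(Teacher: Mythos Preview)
Your argument is correct. The paper itself does not supply a proof of Corollary~\ref{yangcor}; it only records the chain of implications $\eqref{eq:yang}\Rightarrow\text{(Y)}\Rightarrow\text{(HP)}\Rightarrow\text{(PPW)}$ and refers to \cite{A} for the details. Your derivation of (Y) from \eqref{eq:yang} via the Cauchy--Schwarz substitution $\sum_j\lambda_j^2\geq k\bar\lambda^2$ and the resulting factorization is exactly the standard one found there. The one structural difference is that you obtain (PPW) and (HP) each directly from (Y), whereas the paper asserts the strictly linear chain (Y)$\Rightarrow$(HP)$\Rightarrow$(PPW); since the corollary only claims the three inequalities themselves, this is immaterial, and your Jensen argument for (HP) is in any case the usual route.
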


In the case $V\equiv A\equiv0$ these estimates are known as second Yang inequality, Hile-Protter inequality and  Payne-P\'olya-Weinberger inequality, respectively. For any $k$ one has the implications
\begin{equation}\label{eq:implic}
	\eqref{eq:yang}\Rightarrow \text{(Y)}\Rightarrow\text{(HP)}\Rightarrow\text{(PPW)}.
\end{equation}
The proof of \eqref{eq:implic} is based on general arguments (see \cite{A}) and will not be reproduced here. 

\begin{remark}\label{yangppw}
  Note that (PPW) implies \eqref{eq:ppw}. Indeed, estimating $\frac1k \sum_{j=1}^k \lambda_j \leq \lambda_k$ we obtain $\lambda_{k+1}/\lambda_k \leq 1+4/d$. Multiplying these estimates for consecutive values of $k$ we obtain \eqref{eq:ppw}.
\end{remark}


\subsection{Proof of the main results}

We now show how Theorem \ref{main} and Corollary \ref{maincor} follow
from Propositions \ref{laptev}, \ref{chiti} and \ref{yang}.

\begin{proof}[Proof of Theorem \ref{main}]
  To get \eqref{eq:main} we simply combine the estimates
  \eqref{eq:laptev} and \eqref{eq:chiti} and note that $H_d = (2\pi)^d
  v_d^{-1} C_d(2)^2$.

  To deduce \eqref{eq:mainlegendre} from \eqref{eq:main} we use the
  Legendre transform argument from \cite{LW}. Indeed, for a function
  $h$ on $[0,\infty)$ put $Lh(p) :=
  \sup_{\lambda>0}(p\lambda-h(\lambda))$. So if $f(\lambda):=\sum_j
  (\lambda-\lambda_j)_+$ and $g(\lambda):= \frac2{d+2} H_d^{-1}
  \lambda_1^{-d/2}(\lambda_1-\lambda)_-^{1+d/2}$, then
  \begin{equation*}
    Lf(p) = \{p\}\lambda_{[p]+1} + \sum_{j=1}^{[p]} \lambda_j
  \end{equation*}
  (with $\{p\}$, $[p]$ the fractional and integer parts of $p$) and
  \begin{equation*}
    Lg(p) = p\lambda_1 + \frac d{d+2} H_d^{2/d} \lambda_1 p^{1+2/d}.
  \end{equation*}
  Now $f\geq g$ implies $Lf\leq Lg$, and evaluating the latter at an
  integer $p=k\in\N$ we obtain \eqref{eq:mainlegendre}.
\end{proof}

\begin{proof}[Proof of Corollary \ref{maincor}]
  For the proof of \eqref{eq:mainsingle} we put
  $\lambda=\lambda_{k+1}$ in \eqref{eq:main} and estimate
  $\sum_{j=1}^k(\lambda_{k+1}-\lambda_j) \leq
  k(\lambda_{k+1}-\lambda_1)$. For the proof of \eqref{eq:mainsingle}
  we combine (Y) with \eqref{eq:mainlegendre}.
\end{proof}


\section{Upper bound on eigenvalue sums}

In this section we prove Proposition \ref{laptev}. We let $u_1:=\omega$ and denote by $u_j$, $j\in\N$, orthonormal eigenfunctions corresponding to the eigenvalues $\lambda_j$. We put $\theta_\xi(x) := e^{i\xi\cdot x}\omega(x)$, $\tilde\omega:=\|\omega\|_\infty$ and calculate as in \cite{L} using Plancherel's equality
	\begin{equation}\label{eq:laptev1}
		\begin{split}
			\sum_j (\lambda-\lambda_j)_+
			& \geq \tilde\omega^{-2} \sum_j (\lambda-\lambda_j)_+ \int_\Omega |\overline\omega u_j|^2\,dx \\
			& = (2\pi)^{-d} \tilde\omega^{-2} \sum_j (\lambda-\lambda_j)_+ 
			\int_{\R^d} \left| \int_\Omega \overline\omega(x) u_j(x) e^{-i\xi\cdot x} \,dx \right|^2 \,d\xi \\
			& = (2\pi)^{-d} \tilde\omega^{-2}
			\int_{\R^d} \sum_j (\lambda-\lambda_j)_+  \left| (\theta_\xi, u_j) \right|^2\,d\xi \\
			& = (2\pi)^{-d} \tilde\omega^{-2}
			\int_{\R^d} \left( \int_0^\infty (\lambda-\nu)_+  d(E(\nu)\theta_\xi,\theta_\xi) \right) \,d\xi.
		\end{split}
	\end{equation}
	Here $d E(\nu)$ denotes the spectral measure of $H$. We note that
	\begin{equation*}
		\int_0^\infty d(E(\nu)\theta_\xi,\theta_\xi) = \|\theta_\xi\|^2 = \|\omega\|^2 = 1.
	\end{equation*}
	Since the function $\nu\mapsto (\lambda-\nu)_+$ is convex, we can apply Jensen's inequality to get
	\begin{equation}\label{eq:jensen}
		\begin{split}
			\int_0^\infty (\lambda-\nu)_+  d(E(\nu)\theta_\xi,\theta_\xi) 
			& \geq \left(\lambda - \int_0^\infty \nu d(E(\nu)\theta_\xi,\theta_\xi) \right)_+ \\
			& = \left(\lambda - \int_\Omega \left( |(D-A)\theta_\xi|^2 + V|\theta_\xi|^2 \right)\,dx \right)_+.
		\end{split}
	\end{equation}
	We shall need the following simple
	
\begin{lemma}
	For any $\xi\in\R^d$ one has
	\begin{equation}\label{eq:square}
		\int_\Omega \left( |(D-A)\theta_\xi|^2 + V|\theta_\xi|^2 \right)\,dx
		= \lambda_1 + |\xi|^2.
	\end{equation}
\end{lemma}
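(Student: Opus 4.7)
The plan is to reduce the integrand in \eqref{eq:square} to the quadratic form of $H$ evaluated at $\omega$ by means of the gauge-covariance identity
\begin{equation*}
  (D-A)(e^{i\xi\cdot x}\omega) = e^{i\xi\cdot x}\bigl((D-A)\omega + \xi\omega\bigr),
\end{equation*}
which is just the product rule applied to $D=-i\nabla$. Squaring pointwise (using $|e^{i\xi\cdot x}|=1$) and adding $V|\theta_\xi|^2 = V|\omega|^2$ yields
\begin{equation*}
  |(D-A)\theta_\xi|^2 + V|\theta_\xi|^2 = |(D-A)\omega|^2 + V|\omega|^2 + 2\xi\cdot \re\bigl(\overline\omega\,(D-A)\omega\bigr) + |\xi|^2|\omega|^2,
\end{equation*}
where $\overline\omega\,(D-A)\omega$ is read componentwise as a vector in $\C^d$. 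Integrating and using $\|\omega\|=1$ together with the fact that the quadratic form of $H$ at the normalized eigenfunction $\omega$ equals $\lambda_1$, the left-hand side of \eqref{eq:square} would then equal
\begin{equation*}
  \lambda_1 + |\xi|^2 + 2\xi\cdot c,
  \qquad c_k := \re\int_\Omega \overline\omega\,(D_k - A_k)\omega\,dx.
\end{equation*}

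The remaining task is to show $c=0$. I would argue via the variational principle: since $\|\theta_\xi\|=1$ and $\theta_\xi$ lies in the form domain of $H$, one has $(H\theta_\xi,\theta_\xi)\geq \lambda_1$, which, after the computation above, says
\begin{equation*}
  |\xi|^2 + 2\xi\cdot c \geq 0 \qquad\text{for every } \xi\in\R^d;
\end{equation*}
choosing $\xi = -c$ then forces $c=0$. The only mildly delicate point is checking that $\theta_\xi$ really does belong to the form domain of $H$, but this is routine: multiplication by $e^{i\xi\cdot x}$ sends $C_0^\infty(\Omega)$ to itself, and if $\omega_n \in C_0^\infty(\Omega)$ approximates $\omega$ in the form norm, then by the same gauge-covariance identity $e^{i\xi\cdot x}\omega_n$ approximates $\theta_\xi$ in the form norm. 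I do not expect a genuine obstacle beyond this approximation step.
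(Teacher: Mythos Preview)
Your proof is correct and follows essentially the same route as the paper: expand the quadratic form to get $\lambda_1 + |\xi|^2 + 2\xi\cdot c$ (the paper writes the cross term as $j = \im\int \overline\omega(\nabla-iA)\omega$, which coincides with your $c$ since $D-A = -i(\nabla-iA)$), and then kill the cross term via the variational principle. Your substitution $\xi=-c$ is marginally slicker than the paper's contradiction with small $\epsilon$, and you additionally verify that $\theta_\xi$ lies in the form domain, a point the paper leaves implicit.
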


\begin{proof}
	We denote by $h$ the quadratic form of the operator $H$, i.e.,
	\begin{equation*}
		h[u] = \int_\Omega \left( |(D-A)u|^2 + V|u|^2 \right)\,dx.
	\end{equation*}	
	An easy calculation shows that
	\begin{equation}\label{eq:square1}
		h[\theta_\xi] = \lambda_1 + |\xi|^2 + 2 j\cdot\xi 
	\end{equation}
	where
	\begin{equation*}
		j := \im \int_\Omega \overline\omega (\nabla-iA)\omega \,dx,
	\end{equation*}
	and we have to show that $j=0$. Assume to the contrary that $j\neq 0$ and choose $\xi_0$ such that $j\cdot\xi_0<0$. Then in view of \eqref{eq:square1}, $h[\theta_{\epsilon\xi_0}]<\lambda_1$ for all sufficiently small $\epsilon>0$. But this contradicts the variational characterization of the lowest eigenvalue.
\end{proof}

	Combining \eqref{eq:laptev1}, \eqref{eq:jensen} and \eqref{eq:square} we arrive at
	\begin{equation*}
		\begin{split}
			\sum_j (\lambda-\lambda_j)_+
			& \geq (2\pi)^{-d} \tilde\omega^{-2}
			\int_{\R^d} \left(\lambda-\lambda_1 - |\xi|^2\right)_+ \,d\xi \\
			& = (2\pi)^{-d} \tilde\omega^{-2} v_d \frac 2{d+2} \left(\lambda-\lambda_1\right)_+^{1+d/2},
		\end{split}
	\end{equation*}
	which proves the assertion.


\section{Eigenfunction estimates for magnetic Schr\"odinger operators}

\subsection{A non-sharp bound}

As we pointed out after Proposition \ref{chiti}, the constants in \eqref{eq:chiti} are sharp. Before giving its rather involved proof we would like to derive a (non-sharp) estimate which still has the correct form needed in the proof of Theorem \ref{main}. More precisely, we shall prove that if $\omega$ is an eigenfunction corresponding to an eigenvalue $\lambda$ of $H$, then
\begin{equation*}
	\|\omega\|_\infty \leq \tilde C_d \lambda^{d/4}\|\omega\|_2,
\end{equation*}
with
\begin{equation*}
	\tilde C_d = (e/d\pi)^{d/4}.
\end{equation*}
	
To see this, we note that by the diamagnetic inequality and the Feynman-Kac formula \cite{D} one has for all $t>0$ and all $x\in\Omega$ the pointwise estimate
\begin{equation*}
	\left| e^{-t\lambda}\omega(x) \right| 
	= \left|\left(e^{-tH}\omega\right)(x)\right|
	\leq \left(e^{t\Delta}|\omega|\right)(x).
\end{equation*}
Here $|\omega|$ is extended by $0$ to $\R^d$ and $e^{t\Delta}$ denotes the heat semi-group on $\R^d$. Using the explicit expression of its kernel we can estimate
\begin{equation*}
	\left(e^{t\Delta}|\omega|\right)(x)
	\leq (4\pi t)^{-d/2} \left( \int_{\R^d} e^{-|x-y|^2/2t}\,dy \right)^{1/2} \|\omega\|_2
	= (4\pi t)^{-d/4} \|\omega\|_2.
\end{equation*}
Combining the previous two relations and optimizing with respect to $t$ we arrive at
\begin{equation*}
	\left|\omega(x) \right| 
	\leq \inf_{t>0} e^{t\lambda} (4\pi t)^{-d/4} \|\omega\|_2
	=  \left(e/d\pi\right)^{d/4} \lambda^{d/4} \|\omega\|_2,
\end{equation*}
as claimed.
	

\subsection{Proof of Proposition \ref{chiti}}

Now we turn to the sharp bound \eqref{eq:chiti}. We introduce the ball
\begin{equation*}
	S:= \{ x\in\R^d : |x| < j_{(d-2)/2}\lambda^{-1/2} \}
\end{equation*}
and the function
\begin{equation*}
	z(x) := |x|^{-(d-2)/2} J_{(d-2)/2}(\sqrt\lambda |x|).
\end{equation*}
Note that $S$ is the ball centered at the origin for which the Dirichlet problem (with $V\equiv A\equiv 0$) has lowest eigenvalue $\lambda$. The function $z$ is the corresponding eigenfunction. Next we introduce the ball
\begin{equation}\label{eq:rearrangeomega}
	\Omega^* := \{ x\in\R^d : |x| <  v_d^{-1/d} |\Omega|^{1/d} \}
\end{equation}
centered at the origin with the same volume as $\Omega$. The constant $v_d$ is the volume of the $d$-dimensional unit ball, see \eqref{eq:vd}.

\begin{lemma}\label{faberkrahn}
	One has $S\subset\Omega^*$.
\end{lemma}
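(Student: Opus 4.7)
The plan is to combine the diamagnetic inequality, non-negativity of $V$, and the Faber-Krahn inequality to compare the eigenvalue $\lambda$ of $H$ with the first Dirichlet eigenvalue of the symmetrized ball $\Omega^*$, and then use the explicit spectrum of the Dirichlet Laplacian on a ball.

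First I would observe that $\lambda \geq \lambda_1(H,\Omega)$, since $\lambda_1$ is the bottom of the spectrum. Next, by Kato's diamagnetic inequality together with the positivity of $V$, the quadratic form $h[u]=\int_\Omega(|(D-A)u|^2+V|u|^2)\,dx$ satisfies $h[u]\geq\int_\Omega|\nabla|u||^2\,dx$ for every $u\in C_0^\infty(\Omega)$. By the variational principle this implies
\begin{equation*}
  \lambda_1(H,\Omega)\;\geq\;\lambda_1(-\Delta,\Omega),
\end{equation*}
where $\lambda_1(-\Delta,\Omega)$ denotes the first Dirichlet eigenvalue of the (unmagnetic, potential-free) Laplacian on $\Omega$.

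Next I would invoke the classical Faber-Krahn inequality, $\lambda_1(-\Delta,\Omega)\geq\lambda_1(-\Delta,\Omega^*)$, using that $\Omega^*$ is the ball with $|\Omega^*|=|\Omega|$. Chaining these inequalities gives $\lambda\geq\lambda_1(-\Delta,\Omega^*)$. Since $\Omega^*$ is a ball of radius $R^* := v_d^{-1/d}|\Omega|^{1/d}$, and the Dirichlet Laplacian on a ball of radius $R$ has lowest eigenvalue $(j_{(d-2)/2}/R)^2$, this reads
\begin{equation*}
  \lambda\;\geq\;\bigl(j_{(d-2)/2}/R^*\bigr)^{2},
  \qquad\text{i.e.}\qquad
  j_{(d-2)/2}\lambda^{-1/2}\;\leq\;R^*.
\end{equation*}
The left-hand side is exactly the radius of $S$, so $S\subset\Omega^*$ as claimed.

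There is no real obstacle here once the three standard inputs (diamagnetic inequality, monotonicity of the quadratic form in $V$, and Faber-Krahn) are assembled; the mild point to be careful about is that $\lambda$ need not be the bottom of the spectrum, which is handled in the trivial first line. Everything else reduces to monotonicity in the radius of the Dirichlet spectrum of a ball.
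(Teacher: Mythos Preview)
Your proof is correct and follows essentially the same route as the paper: use $\lambda\geq\lambda_1(H,\Omega)$, then the diamagnetic inequality and $V\geq 0$ to get $\lambda_1(H,\Omega)\geq\lambda_1(-\Delta,\Omega)$, then Faber--Krahn to reach $\lambda\geq\lambda_1(-\Delta,\Omega^*)$, and finally compare radii. The only cosmetic difference is that the paper phrases the last step as $\lambda_1(-\Delta,S)=\lambda\geq\lambda_1(-\Delta,\Omega^*)$ and invokes domain monotonicity for concentric balls, whereas you write out the explicit eigenvalue $(j_{(d-2)/2}/R)^2$; these are of course the same observation.
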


\begin{proof}	
	We denote by $\lambda_1(-\Delta,\mathcal O)$ the lowest eigenvalue of the Dirichlet Laplacian on a domain $\mathcal O$. Note that by definition of $S$
	\begin{equation*}
		\lambda_1(-\Delta,S)=\lambda.
	\end{equation*}
	Now the variational principle together with the diamagnetic inequality and the non-negativity of $V$ imply
	\begin{equation*}
		\lambda \geq \lambda_1(-\Delta,\Omega).
	\end{equation*}
	Finally, the Faber-Krahn inquality states that
	\begin{equation*}
		\lambda_1(-\Delta,\Omega) \geq \lambda_1(-\Delta,\Omega^*).
	\end{equation*}
	Combining the previous relations we obtain $\lambda_1(-\Delta,S)\geq\lambda_1(-\Delta,\Omega^*)$. By the variational principle, this implies $S\subset\Omega^*$.
\end{proof}

We denote by $\omega^*:\Omega^*\rightarrow[0,\infty)$ the spherically decreasing rearrangement of $\omega$, see \cite{LL}. We note that $z$ is a spherically decreasing function. (This follows either by properties of Bessel functions or by rearrangement arguments.) In particular,
\begin{equation}\label{eq:zorigin}
	\|z\|_\infty = z(0).
\end{equation}
The core of Proposition \ref{chiti} is the following comparison result.

\begin{lemma}\label{comparison}
	Assume that $\omega$ is normalized such that $\|\omega\|_{L_\infty(\Omega)} = \|z\|_{L_\infty(S)}$. Then
	\begin{equation*}
		\omega^*(x) \geq z(x), \qquad x\in S.
	\end{equation*}
\end{lemma}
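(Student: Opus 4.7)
The approach is to combine Kato's inequality, Talenti's rearrangement procedure, and a Sturm-type Wronskian comparison on the ball $S$.

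\emph{Step 1.} From the eigenvalue equation $(D-A)^2\omega + V\omega = \lambda\omega$ together with $V\geq 0$, Kato's distributional inequality yields
\begin{equation*}
	-\Delta|\omega| \leq \lambda|\omega| \qquad\text{in } \Omega,
\end{equation*}
in the weak sense; extending $|\omega|$ by zero outside $\Omega$, the inequality holds on $\R^d$.

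\emph{Step 2.} I would then run Talenti's rearrangement argument. For a.e.\ level $t>0$, integrating the inequality of Step~1 over the superlevel set $\{|\omega|>t\}$ and combining the divergence theorem with the coarea formula, Cauchy--Schwarz on the level surface, and the classical isoperimetric inequality produces, after passage to the radial coordinate $\rho=(s/v_d)^{1/d}$ on $\Omega^*$, the pointwise integral inequality
\begin{equation*}
	-\rho^{d-1}(\omega^*)'(\rho) \leq \lambda\int_0^\rho \tau^{d-1}\omega^*(\tau)\,d\tau, \qquad \rho\in(0,r_{\Omega^*}),
\end{equation*}
where $\omega^*$ is the spherically decreasing rearrangement on $\Omega^*$ and $r_{\Omega^*}=(|\Omega|/v_d)^{1/d}$. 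Since $\rho^{d-1}(\omega^*)'(\rho)\to 0$ as $\rho\to 0^+$, this is equivalent to the distributional radial inequality
\begin{equation*}
	-\bigl(\rho^{d-1}(\omega^*)'(\rho)\bigr)' \leq \lambda\rho^{d-1}\omega^*(\rho),
\end{equation*}
i.e., $-\Delta\omega^*\leq\lambda\omega^*$ on $\Omega^*$. By Lemma~\ref{faberkrahn}, $S\subset\Omega^*$, so this inequality holds in particular on $S$.

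\emph{Step 3.} On $S$, the function $z$ is radial, positive in the interior, and satisfies $-\Delta z = \lambda z$ with $z'(0)=0$. Consider the weighted Wronskian
\begin{equation*}
	W(\rho) := \rho^{d-1}\bigl((\omega^*)'(\rho)\,z(\rho) - \omega^*(\rho)\,z'(\rho)\bigr), \qquad \rho\in(0,r_S).
\end{equation*}
Using the equality for $z$ and the distributional inequality for $\omega^*$ from Step~2, a direct computation gives $W'(\rho)\geq 0$ in the distributional sense, while the integral form of Step~2 gives $W(\rho)\to 0$ as $\rho\to 0^+$. Hence $W\geq 0$ on $(0,r_S)$, which is equivalent to $(\omega^*/z)'(\rho)\geq 0$ there. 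The normalization gives $\omega^*(0)/z(0) = \|\omega\|_\infty/\|z\|_\infty = 1$, so $\omega^*/z\geq 1$ on $(0,r_S)$, i.e., $\omega^*\geq z$ on $S$.

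\emph{Main obstacle.} The principal technical difficulty lies in Step~2: the function $|\omega|$ need not be smooth (in particular near its zero set), and the rearrangement $\omega^*$ is in general only Lipschitz on compact subsets of the interior of its support. The Talenti chain of inequalities must therefore be carried out distributionally, using Sard's theorem to restrict to a.e.\ regular level of $|\omega|$, and a suitable regularization where needed; the possible flat parts of $\omega^*$ corresponding to atoms of the distribution function must be handled with care. A further minor point is justifying Kato's inequality under only $A\in L_{2,\loc}$, which is by now classical.
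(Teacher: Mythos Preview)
Your Steps~1 and~2 are essentially what the paper does (Lemma~\ref{integralineq}): Kato's inequality combined with the level-set argument and the isoperimetric inequality gives the \emph{integral} inequality
\begin{equation*}
  -\rho^{d-1}(\omega^*)'(\rho) \leq \lambda\int_0^\rho \tau^{d-1}\omega^*(\tau)\,d\tau .
\end{equation*}
The gap is in the sentence ``this is equivalent to the distributional radial inequality $-\bigl(\rho^{d-1}(\omega^*)'\bigr)' \leq \lambda\rho^{d-1}\omega^*$''. That implication goes only one way: integrating the differential inequality (with the boundary condition at $0$) gives the integral inequality, but from $F(\rho)\leq G(\rho)$ for all $\rho$ one cannot conclude $F'(\rho)\leq G'(\rho)$. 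The Talenti machinery produces only the integral form; the pointwise bound $-\Delta\omega^*\leq\lambda\omega^*$ for the rearrangement is \emph{not} available in general. Your Wronskian computation $W'=(\rho^{d-1}(\omega^*)')'z-(\rho^{d-1}z')'\omega^*\geq 0$ uses exactly the differential inequality (multiplied by $z>0$), so Step~3 does not go through. If one tries to run the comparison directly from the integral form, one gets
\begin{equation*}
  W(\rho)\ \geq\ \lambda\int_0^\rho \tau^{d-1}\bigl[\omega^*(\rho)z(\tau)-z(\rho)\omega^*(\tau)\bigr]\,d\tau,
\end{equation*}
and non-negativity of the right-hand side is precisely the monotonicity of $\omega^*/z$ that you are trying to prove---the argument is circular. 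In fact the conclusion $(\omega^*/z)'\geq 0$ is stronger than the lemma and there is no reason to expect it.

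The paper circumvents this by a Chiti-type variational comparison rather than a Sturm/Wronskian one. One sets $c:=\max_{[0,|S|]} v/u$ (in the volume variable), picks a point $s_0$ where the maximum is attained, and patches $w:=cu$ on $[0,s_0]$ with $w:=v$ on $[s_0,|S|]$. Because $cu\geq v$ on $[0,s_0]$, the integral inequality for $u$ forces $w$ to satisfy the \emph{same} integral inequality as $v$; passing back to radial coordinates this means the corresponding radial function $g$ on $S$ has Rayleigh quotient $\leq\lambda$. Since $\lambda$ is the simple ground-state eigenvalue on $S$, $g$ must be a multiple of $z$, whence $c=1$. A separate (easy) case deals with the possibility $|S|\geq|\supp\omega|$, where $u$ may vanish on part of $[0,|S|]$ and the quotient $v/u$ is not defined everywhere. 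This argument never requires the differential inequality for $\omega^*$, only the integral one that Talenti's method actually delivers.
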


We defer the rather involved proof of this lemma to the next subsection. Now we shall use it to give the

\begin{proof}[Proof of Proposition \ref{chiti}]
	By Lemma \ref{comparison} one has
	\begin{equation*}
		\|z\|_{L_\infty(S)} \omega^*(x) \geq \|\omega\|_{L_\infty(\Omega)} z(x),
		\qquad x\in S.
	\end{equation*}
	Hence according to rearrangement properties \cite{LL} and Lemma \ref{faberkrahn}
	\begin{equation}\label{eq:mainproof}
		\|\omega \|_{L_p(\Omega)} = \|\omega^* \|_{L_p(\Omega^*)} \geq \|\omega^* \|_{L_p(S)} 
		\geq \frac{\|z\|_{L_p(S)}}{\|z\|_{L_\infty(S)}} \|\omega\|_{L_\infty(\Omega)}.
	\end{equation}
	The asymptotics $r^{-(d-2)/2}J_{(d-2)/2}(r)\to \Gamma(d/2)^{-1} 2^{-(d-2)/2}$ as $r\to 0$ (see \cite[(9.1.7)]{AS}) and \eqref{eq:zorigin} imply
	\begin{equation*}
		\frac{\|z\|_{L_\infty(S)}}{\|z\|_{L_p(S)}} = C_d(p) \lambda^{d/2p}.
	\end{equation*}
	In the special case $p=2$ we use (see \cite[(11.4.5)]{AS})
	\begin{equation*}
		\int_0^{j_{(d-2)/2}} r J_{(d-2)/2}^2(r) \,dr = \frac 12 j_{(d-2)/2}^2 J_{d/2}^2(j_{(d-2)/2})
	\end{equation*}
	and $|\Sph^{d-1}|=d\pi^{d/2}/\Gamma((d+2)/2)$ to get the claimed expression.
\end{proof}

\subsection{Proof of Lemma \ref{comparison}}

Since $z$ is a radial function, there is a function $v$ on $[0,|S|]$ such that
\begin{equation*}
	v(v_d |x|^d) = z(x), \qquad x\in S.
\end{equation*}
Here $v_d$ is the volume of the unit ball, see \eqref{eq:rearrangeomega}. From the differential equation satisfied by $z$ one derives easily the integro-differential equation
\begin{equation}\label{eq:integraleq}
	-v'(s) = d^{-2} v_d^{-2/d} \lambda s^{-2+2/d} \int_0^s v(t)\,dt,
	\qquad 0< s < |S|,
\end{equation}
for $v$. Our next goal is to derive a similar integro-differential \emph{inequality} for $\omega^*$. Namely, we let $u$ be the function on $[0,|\Omega|]$ such that
\begin{equation*}
	u(v_d |x|^d) = \omega^*(x), \qquad x\in \Omega^*.
\end{equation*}
Note that $u$ is a decreasing function with $u(0)=\|\omega\|_\infty$ and $u(|\Omega|)=0$.

\begin{lemma}\label{integralineq}
	One has
	\begin{equation}\label{eq:integralineq}
		-u'(s) \leq d^{-2} v_d^{-2/d} \lambda s^{-2+2/d} \int_0^s u(t)\,dt,
		\qquad 0< s < |\Omega|.
	\end{equation}
\end{lemma}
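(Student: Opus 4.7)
The plan is to carry out the classical Chiti--Talenti level-set/rearrangement argument, with the magnetic field neutralised at the very first step by means of Kato's diamagnetic inequality. Set $\phi := |\omega|$, extended by $0$ outside $\Omega$. Since $\omega$ satisfies $(D-A)^2 \omega = (\lambda - V)\omega$, Kato's inequality gives, in the sense of distributions on $\Omega$,
\begin{equation*}
  -\Delta \phi \;\leq\; \re\!\left(\overline{\sgn\omega}\,(D-A)^2\omega\right) \;=\; (\lambda - V)\phi \;\leq\; \lambda\,\phi,
\end{equation*}
where the last inequality uses $V\geq 0$. This is precisely the pointwise/distributional inequality that Chiti's original argument exploits in the case $A\equiv V\equiv 0$, so the magnetic field disappears from the problem from here on.

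Next I would run the standard level-set computation. For $0<t<\|\omega\|_\infty$ let $\mu(t):=|\{\phi>t\}|$, integrate $-\Delta\phi\leq\lambda\phi$ over $\{\phi>t\}$, and apply the divergence theorem to obtain
\begin{equation*}
  \int_{\{\phi=t\}}|\nabla\phi|\,d\mathcal{H}^{d-1} \;\leq\; \lambda\int_{\{\phi>t\}}\phi\,dx.
\end{equation*}
Cauchy--Schwarz together with the coarea formula $-\mu'(t)=\int_{\{\phi=t\}}|\nabla\phi|^{-1}\,d\mathcal{H}^{d-1}$ then yields
\begin{equation*}
  \mathcal{H}^{d-1}(\{\phi=t\})^{2} \;\leq\; (-\mu'(t))\int_{\{\phi=t\}}|\nabla\phi|\,d\mathcal{H}^{d-1},
\end{equation*}
and the classical isoperimetric inequality gives $\mathcal{H}^{d-1}(\{\phi=t\})\geq d\,v_d^{1/d}\mu(t)^{(d-1)/d}$. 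Combining these three displays,
\begin{equation*}
  d^{2}\,v_d^{2/d}\,\mu(t)^{2(d-1)/d} \;\leq\; (-\mu'(t))\,\lambda\int_{\{\phi>t\}}\phi\,dx.
\end{equation*}

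Finally I would change variables. With $u=\omega^*$ viewed as a function of the volume coordinate $s$, one has $s=\mu(t)\Leftrightarrow t=u(s)$ and $-\mu'(t)=1/(-u'(s))$, while the layer-cake identity gives $\int_{\{\phi>t\}}\phi\,dx=\int_0^{\mu(t)} u(\tau)\,d\tau$. Substituting transforms the previous inequality into
\begin{equation*}
  d^{2}\,v_d^{2/d}\,s^{2(d-1)/d}\,(-u'(s)) \;\leq\; \lambda\int_0^{s}u(\tau)\,d\tau,
\end{equation*}
which rearranges to \eqref{eq:integralineq}.

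The main obstacle is technical rather than conceptual: justifying Kato's inequality and the divergence/coarea manipulations in the present generality. Kato's inequality is applicable because $\omega\in\dom(H)$ and $(D-A)^2\omega=(\lambda-V)\omega\in L^1_{\loc}(\Omega)$. The level-set identities are valid for a.e.\ $t$ by Sard's theorem for Sobolev functions, which is enough to deduce the pointwise inequality for $u$ at a.e.\ $s\in(0,|\Omega|)$; the set of $t$ where $\mathcal{H}^{d-1}(\{\phi=t\}\cap\{\nabla\phi=0\})>0$ can be handled by the standard truncation/approximation device. Apart from invoking Kato at the outset, no modification of the $A\equiv 0$ proof is needed.
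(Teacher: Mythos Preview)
Your proposal is correct and follows essentially the same approach as the paper: Kato's inequality to pass from $\omega$ to $|\omega|$ and obtain $-\Delta|\omega|\leq\lambda|\omega|$, then integration over $\{|\omega|>t\}$ with the divergence theorem, the coarea formula, Cauchy--Schwarz, the isoperimetric inequality, and finally the change of variables $s=\mu(t)$. The paper carries out exactly these steps in the same order; your additional remarks on the technical justification (Sard-type arguments, truncation) are not spelled out in the paper but are the standard caveats.
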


\begin{proof}
	We recall Kato's inequality \cite{S1, S2},
	\begin{equation*}
		\re\left( (\sgn w) (D-A)^2 w\right) \geq -\Delta |w|,
	\end{equation*}
	where $\sgn w(x) = \overline{w(x)}/|w(x)|$ if $w(x)\neq 0$ and $\sgn w(x)=0$ otherwise.	Since $V\geq 0$ we have
	\begin{equation*}
		\lambda |\omega| = \re\left( (\sgn\omega) ((D-A)^2+V)\omega\right) \geq -\Delta |\omega|.
	\end{equation*}
	We integrate this inequality over the set $\{|\omega|>t\}$ and use Gauss's theorem in the form
	\begin{equation*}
		-\int_{\{|w|>t\}} \Delta |w| \,dx = \int_{\{|w|=t\}} |\nabla |w|| \,d\sigma.
	\end{equation*}
	Here $d\sigma$ denotes $(d-1)$-dimensional surface measure. We obtain
	\begin{equation}\label{eq:intineq1}
		\lambda \int_{\{|\omega|>t\}} |\omega| \,dx \geq \int_{\{|\omega|=t\}} |\nabla |\omega|| \,d\sigma.
	\end{equation}
	
	The next step is to estimate the surface integral on the RHS from below. We introduce the distribution function
	\begin{equation*}
		\mu(t) := |\{x\in\Omega : |\omega|>t \}|
	\end{equation*}
	and recall the coarea formula
	\begin{equation*}
		-\mu'(t) = \int_{\{|\omega|=t\}} |\nabla |\omega||^{-1} \,d\sigma.
	\end{equation*}
	Hence by Cauchy-Schwarz
	\begin{equation*}
		\begin{split}
			\sigma(\{|\omega|=t\})^2 
			& \leq \left(\int_{\{|\omega|=t\}} |\nabla |\omega||^{-1} \,d\sigma\right)
			\left(\int_{\{|\omega|=t\}} |\nabla |\omega|| \,d\sigma\right)\\
			& = -\mu'(t) \int_{\{|\omega|=t\}} |\nabla |\omega|| \,d\sigma.
		\end{split}
	\end{equation*}
	Combining this with the isoperimetric inequality
	\begin{equation*}
		\sigma(\{|\omega|=t\}) \geq d v_d^{1/d} \mu(t)^{1-1/d}
	\end{equation*}
	we finally arrive at
	\begin{equation*}\label{eq:intineq2}
		\int_{\{|\omega|=t\}} |\nabla |\omega|| \,d\sigma	\geq d^2 v_d^{2/d} \mu(t)^{2-2/d} (-\mu'(t))^{-1}.
	\end{equation*}
	
	Combining this with \eqref{eq:intineq1} we have shown that
	\begin{equation*}
		\mu(t)^{2-2/d} (-\mu'(t))^{-1} \leq d^{-2} v_d^{-2/d} \lambda \int_{\{|\omega|>t\}} |\omega| \,dx.
	\end{equation*}
	It remains to substitute $t=u(s)$ and to note that when $u(s)\neq 0$,
	\begin{equation*}
		\mu(t)^{2-2/d} (-\mu'(t))^{-1} = s^{2-2/d} (-u'(s)),
		\qquad \int_{\{|\omega|>t\}} |\omega| \,dx = \int_0^s u(r)\,dr.
	\end{equation*}
	This proves the assertion.
\end{proof}

Now everything is in place for the

\begin{proof}[Proof of Lemma \ref{comparison}]
	The strategy is to construct a trial function $g$ for $-\Delta$ on $S$ which is as good as $z$ and hence must coincide with $z$. We distinguish two cases, namely whether $|S|<|\supp\omega|$ or not. We begin by assuming that this strict inequality holds. Recall the definition of the functions $u$, $v$ before Lemma \ref{integralineq}. Both functions are decreasing and the normalization condition $\|\omega\|_{L_\infty(\Omega)} = \|z\|_{L_\infty(S)}$ means $u(0)=v(0)$. Moreover, $u$ is strictly positive on $[0,|S|]$. (This is where we use that $|S|<|\supp\omega|$.) Hence
	\begin{equation*}
		c := \max_{0\leq s\leq |S|} v(s)/u(s)
	\end{equation*}
	is finite and greater or equal to $1$. We have to prove that $c=1$. For this we choose a point $s_0\in [0,|S|)$ such that $v(s_0)=c u(s_0)$. (The point can be chosen smaller than $|S|$ since $v(|S|)=0$.) Defining
	\begin{equation*} 
		w(s) := \left\{ 
		\begin{array}{ll}
			c u(s), & 0 \leq s \leq s_0, \\
			v(s), & s_0 \leq s \leq |S|,
		\end{array} \right. 
	\end{equation*}
	it follows from \eqref{eq:integraleq}, \eqref{eq:integralineq} and the definition of $c$ that $w$ satisfies
	\begin{equation}\label{eq:integralineqw}
		-w'(s) \leq d^{-2} v_d^{-2/d} \lambda s^{-2+2/d} \int_0^s w(t)\,dt,
		\qquad 0< s < |S|.
	\end{equation}
	Now we define $g(x):=w(v_d|x|^d)$ for $x\in S$ and note that $g\in \overset{\circ}{H^1}(S)$ with
	\begin{equation*}
		\int_S |g|^2 \,dx = \int_0^{|S|} |w|^2\,ds
	\end{equation*}
	and, using \eqref{eq:integralineqw},
	\begin{equation*}
		\begin{split}
			\int_S |\nabla g|^2 \,dx & = d^2 v_d^{2/d} \int_0^{|S|} |w'|^2 s^{2-2/d}\,ds \\
			& \leq \lambda \int_0^{|S|} (-w'(s)) \int_0^s w(r)\,dr\,ds \\
			& = \lambda \int_0^{|S|} |w|^2\,ds.
		\end{split}
	\end{equation*}
	Recall that $\lambda$ is the lowest eigenvalue of $-\Delta$ on $S$. Since it is simple and has $z$ as a corresponding eigenfunction, we conclude that $g$ is a scalar multiple of $z$. Hence $w$ is a scalar multiple of $v$. Since $w$ and $v$ coincide on $[s_0,|S|]$ they coincide everywhere. Hence $c u(s) = v(s)$ for all $0\leq s\leq s_0$. Evaluating at $s=0$ we find $c=1$, which proves the assertion in the case under consideration.

	Now we turn to the case $|S|\geq |\supp\omega|$. Note that by Lemma \ref{faberkrahn} we know that $S\subset\Omega^*$. We extend $\omega^*$ by $0$ to $S$ (if $|\supp\omega|<|S|$), and denote the resulting function by $g$. Lemma \ref{integralineq} and the same calculation as in the first part of the proof imply that $g\in \overset{\circ}{H^1}(S)$ with
	\begin{equation*}
		\int_S |\nabla g|^2 \,dx \leq \lambda \int_S |g|^2\,dx.
	\end{equation*}
	As before, this implies that $g$ is a scalar multiple of $z$ and, after evaluation at $x=0$, that $g=z$. This proves the assertion also in this case.
\end{proof}


\section{Yang inequalities for magnetic Schr\"odinger operators}\label{sec:yang}
 
We turn now to the proof of Proposition \ref{yang}. 
We work under the same assumptions on $\Omega$, $V$ and $A$ as in the previous sections.
Again we denote by $u_i$, $i\in\N$, orthonormal eigenfunctions corresponding to the eigenvalues $\lambda_i$. Throughout the proof we will fix $k\in\N$. Moreover, for any $1\leq i\leq k$ and any $1\leq l\leq d$ we define the function
\begin{equation*}
	\phi_i := x_l u_i - \sum_{j=1}^k a_{ij}u_j,
	\qquad a_{ij} := \int x_l u_i \overline{u_j}\,dx,
\end{equation*}
and note that
\begin{equation}\label{eq:ortho}
	(\phi_i,u_j) = 0, \qquad 1\leq j\leq k.
\end{equation}
Hence the variational principle implies
\begin{equation}\label{eq:varprin}
	\lambda_{k+1} \leq 
	\frac{\int\left( |(D-A)\phi_i|^2+V|\phi_i|^2\right)\,dx}{\int |\phi_i|^2\,dx} 
\end{equation}
(with the convention that the RHS is infinite if $\phi_i\equiv 0$). Using the commutator identity
\begin{equation}\label{eq:commut}
	[(D-A)^2,x_l] = -2\imunit (D_l-A_l)
\end{equation}
and the orthogonality \eqref{eq:ortho} one easily finds
\begin{equation*}
	\int\left( |(D-A)\phi_i|^2+V|\phi_i|^2\right)\,dx 
	= \lambda_i \int |\phi_i|^2\,dx + 2\im\int (D_l-A_l)u_i \overline{\phi_i} \,dx,
\end{equation*}
which, together with \eqref{eq:varprin}, leads to
\begin{equation}\label{eq:evdiff}
	\lambda_{k+1} - \lambda_i \leq \frac{2\im\int (D_l-A_l)u_i \overline{\phi_i} \,dx}{\int |\phi_i|^2\,dx}.
\end{equation}

Next, we manipulate the numerator. We note that
\begin{equation*}
	\begin{split}
		2\im\int (D_l-A_l)u_i \overline{x_l u_i} \,dx
		& = - 2\re\int \frac{\partial u_i}{\partial x_l} \overline{x_l u_i}\,dx
		= -\int x_l \frac{\partial |u_i|^2}{\partial x_l}\,dx\\
		& = \int |u_i|^2\,dx 
		= 1,
	\end{split}
\end{equation*}
and hence
\begin{equation}\label{eq:numeq}
	2\im\int (D_l-A_l)u_i \overline{\phi_i} \,dx
	= 1 + 2\re\sum_{j=1}^k \overline{a_{ij}} b_{ij}
\end{equation}
where we have set
\begin{equation*}
	b_{ij} := \imunit \int (D_l-A_l)u_i \overline{u_j}\,dx.
\end{equation*}
With the help of \eqref{eq:commut} we calculate
\begin{equation*}
	2 b_{ij} = - \int \left( x_l u_i \overline{(D-A)^2 u_j}- (D-A)^2u_i \overline{x_l u_j}\right) \,dx
	= (\lambda_i-\lambda_j) a_{ij}.
\end{equation*}
Hence \eqref{eq:numeq} becomes
\begin{equation}\label{eq:numeq2}
	2\im\int (D_l-A_l)u_i \overline{\phi_i} \,dx
	= 1 + \sum_{j=1}^k (\lambda_i-\lambda_j)|a_{ij}|^2
\end{equation}
Now we use the Cauchy-Schwarz inequality to estimate the numerator in \eqref{eq:evdiff}. Note that by \eqref{eq:ortho}
\begin{equation*}
  \im\int (D_l-A_l)u_i \overline{\phi_i} \,dx 
  = \im\int \left((D_l-A_l)u_i
  +\imunit\sum_{j=1}^k b_{ij}u_j\right) \overline{\phi_i} \,dx
\end{equation*}
and hence
\begin{equation*}
  \begin{split}
    & \left(\im\int (D_l-A_l)u_i \overline{\phi_i} \,dx\right)^2 \\
    & \qquad \leq \left(\int |\phi_i|^2\,dx\right)
    \left(\int \left|(D_l-A_l)u_i
    +\imunit\sum_{j=1}^k b_{ij}u_j\right|^2\,dx\right) \\
    & \qquad = \left(\int |\phi_i|^2\,dx\right) 
    \left(\int |(D_l-A_l)u_i|^2\,dx - \sum_{j=1}^k |b_{ij}|^2 \right)\\
    & \qquad = \left(\int |\phi_i|^2\,dx\right) 
    \left(\int |(D_l-A_l)u_i|^2\,dx 
    - \frac14\sum_{j=1}^k (\lambda_i-\lambda_j)^2|a_{ij}|^2 \right).
  \end{split}
\end{equation*}
Combining this inequality with \eqref{eq:evdiff} and \eqref{eq:numeq2} we arrive at
\begin{equation*}
	\lambda_{k+1} - \lambda_i 
	\leq \frac{4\int |(D_l-A_l)u_i|^2\,dx - \sum_{j=1}^k (\lambda_i-\lambda_j)^2|a_{ij}|^2 }
	{1 + \sum_{j=1}^k (\lambda_i-\lambda_j)|a_{ij}|^2}.
\end{equation*}
Equivalently,
\begin{equation}\label{eq:evdiff2}
	(\lambda_{k+1} - \lambda_i) + (\lambda_{k+1}-\lambda_j)(\lambda_i-\lambda_j)\sum_{j=1}^k |a_{ij}|^2	
	\leq 4\int |(D_l-A_l)u_i|^2\,dx.
\end{equation}
Now we write $a_{ijl}$ instead of $a_{ij}$ to emphasize the dependence on $l$, and define
\begin{equation*}
	A_{ij} := \sum_{l=1}^d |a_{ijl}|^2.
\end{equation*}
Summing \eqref{eq:evdiff2} over $l$ and using that $V\geq 0$ we obtain
\begin{equation*}
	d(\lambda_{k+1} - \lambda_i) + (\lambda_{k+1}-\lambda_j)(\lambda_i-\lambda_j)\sum_{j=1}^k A_{ij}	
	\leq 4\lambda_i.
\end{equation*}
Note that $A_{ij}=A_{ji}$. In order to (anti-)symmetrize, we multiply by $(\lambda_{k+1}-\lambda_i)$ and sum over $i$. The resulting inequality is
\begin{equation*}
	d\sum_{i=1}^k(\lambda_{k+1} - \lambda_i)^2 \leq 4\sum_{i=1}^k \lambda_i(\lambda_{k+1} - \lambda_i).
\end{equation*}
This completes the proof of Proposition \ref{yang}.


\bibliographystyle{amsalpha}

\end{document}